\newfont{\footsc}{cmcsc10 at 8truept}
\newfont{\footbf}{cmbx10 at 8truept}
\newfont{\footrm}{cmr10 at 10truept}
\renewcommand{\ps@plain}{%
\renewcommand{\@oddfoot}{\footsc {\footbf }  \footrm\thepage}}
\makeatother \pagestyle{plain} \leftmargin=25mm
\numberwithin{equation}{section}
\newtheorem{thm}{Theorem}[section]
\newtheorem{lem}[thm]{Lemma}
\theoremstyle{definition}
\theoremstyle{remark}
\newtheorem{rem}[thm]{Remark}
\begin{document}
\title{ Borsuk's partition problem in four-dimensional $\ell_{p}$ space}
\date{}
\author{Jun Wang \footnote{Center for Applied Mathematics, Tianjin University, Tianjin, China 300354. E-mail: kingjunjun@tju.edu.cn. } and  Fei Xue \footnote{School of Mathematical Sciences, Nanjing Normal University, No.1 Wenyuan Road Qixia District, Nanjing, China 210046. E-mail: 05429@njnu.edu.cn.}}

\maketitle

\begin{abstract}In 1933, Borsuk made a conjecture that every $n$-dimensional bounded set can be divided into $n+1$ subsets of smaller diameter. Up to now, the problem is still open for $4\leq n\leq 63$. In this paper, we firstly discuss the Banach-Mazur distance between the $n$-dimensional cube and the $\ell_{p}$ ball $(1\leq p< 2)$, then we study the generalized Borsuk's partition problem in metric spaces and prove that all bounded sets $X$ in every four-dimensional $\ell_{p}$ space can be divided into $2^4$ subsets of smaller diameter.

{\bf Keywords:} metric space, Banach-Mazur distance, covering functional, complete set, Borsuk's partition problem.
\end{abstract}

\maketitle

\section{Introduction}
Let $\mathbb{E}^{n}$ be the $n$-dimensional Euclidean space, and in this paper an $n$-dimensional vector $\mathbf{x}\in \mathbb{E}^{n}$ is always treated as a column vector. Let $K$ denote an $n$-dimensional convex body, a bounded compact convex set with non-empty interior $\mathrm{int}(K)$ and boundary $\partial(K)$. By $\mathcal{K}^{n}$ we denote the set of convex bodies in $\mathbb{E}^{n}$.

Let $d(X)$ denote the diameter of a bounded set $X$ of $\mathbb{E}^{n}$ defined by $$d(X)=\sup\{\|\mathbf{x},\mathbf{y}\|:\mathbf{x},\mathbf{y}\in X\},$$
where $\|\mathbf{x},\mathbf{y}\|$ denotes the Euclidean distance between
$\mathbf{x}$ and $\mathbf{y}$. Let $b(X)$ be the smallest number of subsets $X_{1},X_{2},...,X_{b(X)}$ of $X$ such that
$$X=\bigcup_{i=1}^{b(X)}X_{i}$$
and $d(X_{i})< d(X)$ holds for all $i\leq b(X)$. In 1933, K. Borsuk \cite{Borsuk} proposed the following problem:

\leftline{$\mathbf{Borsuk's\ partition\ problem}.$\ \  Is it true that} $$b(X)\leq n+1$$ holds for every bounded set $X$ in $\mathbb{E}^{n}$ ?

Usually, the positive statement of this problem is referred as Borsuk's conjecture. K. Borsuk \cite{Borsuk} proved that the inequality $b(X)\leq 3$ holds for any bounded set $X\subseteq \mathbb{E}^{2}$. For $n=3$, Borsuk's conjecture was confirmed by H. G. Eggleston\cite{Eggleston} in 1955. In 1945, H. Hadwiger\cite{Hadwiger} proved that the inequality $b(K)\leq n+1$ holds for every $n$-dimensional convex body $K$ with smooth boundary. However, in 1993, J. Kahn and G. Kalai\cite{Kahn} discovered counterexamples to Borsuk's conjecture in high dimensions. In 2021, C. Zong \cite{ZongC} gave a computer proof program to deal with this challenging problem. Up to now, the problem is still open for $4\leq n\leq 63$.

Let $\mathbb{M}^n$ denote the Minkowski space with respect to the metric $\|\cdot\|_{C}$ determined by a centrally symmetric convex body $C$. For a bounded set $X\subseteq\mathbb{M}^n$, let $d_{C}(X)$ denote the diameter of $X$ and let $b_{C}(X)$ denote the smallest number such that $X$ can be divided into $b_{C}(X)$ subsets each of which has the diameter strictly smaller than $d_{C}(X)$.

In 1957, B. Gr\"{u}nbaum\cite{Grunbaum} firstly studied the problem in Minkowski planes $\mathbb{M}^{2}$. It was mentioned in \cite{Boltyanski} that for every bounded set $X\subseteq \mathbb{M}^{2}$, if the unit ball $C$ of $\mathbb{M}^{2}$ is a not a parallelogram, then the inequality $b_{C}(X)\leq 3$ holds; otherwise, the inequality $b_{C}(X)\leq 4$ holds.

The covering number $\gamma(K)$ is the smallest number of translates of $\lambda K$ $(0<\lambda< 1)$ such that their union contains $K$. In 1957, H. Hadwiger \cite{Hadwigerr} raised the following conjecture, which has a close relation with the Borsuk's partition problem.

\leftline{$\mathbf{Hadwiger's\ covering\ conjecture.}$ Every convex body $K$ in $\mathbb{E}^{n}$ can be covered by $2^{n}$ translates of $\lambda K$}
\leftline{(or $\mathrm{int}(K)$), where $\lambda$ is a suitable positive number satisfying $\lambda<1$.}

The two-dimensional case had been solved by F. W. Levi\cite{Levi}. However, this conjecture is open for all $n\geq3$ untill now. In 2010, C. Zong\cite{Zong} proposed a four-step program to attack this conjecture.

In 1965, V. G. Boltyanski and I. T. Gohberg \cite{Boltyanskii} proved that
\begin{equation}\label{eq0.1}
 b_{C}(X)\leq \gamma(\widehat{X})
\end{equation}
holds for all metrics and all bounded sets $X$ in $\mathbb{E}^{n}$, where $\widehat{X}$ denotes the closed convex hull of $X$. Based on this fact, they also proposed the following problem:

\leftline{$\mathbf{Problem\ 1}$ Is it true that}
$$b_{C}(X)\leq 2^n$$
holds for all $n$-dimensional metric spaces $\mathbb{M}^n$ and all bounded sets $X\subset \mathbb{M}^{n}$?


In this paper, we concern the $\ell_{p}$ space. For a real number $p\geq1$, the $p$-norm of a $\mathbf{x}\in\mathbb{E}^{n}$ is defined by
 $$\|\mathbf{x} \|_{p}=(|x_{1}|^{p}+|x_{2}|^{p}+\cdots+|x_{n}|^{p})^{\frac{1}{p}}.$$
The maximum norm is the limit of the $p$-norm for $p\rightarrow\infty$, a.k.a.,
$$\|\mathbf{x} \|_{\infty}=\max\{|x_{1}|,|x_{2}|,\cdots,|x_{n}|\}.$$
Let $\mathbb{M}_{p}^{n}$ denote the $n$-dimensional $\ell_{p}$ space, the $n$-dimensional linear space with the $p$-norm.
The $n$-dimensional $\ell_{p}$ ball is denoted by
$$C_{n,p}=\{\mathbf{x}\in \mathbb{R}^{n}:\|\mathbf{x}\|_{p}\leq1\}.$$
Denote by
$$C_{n}=\{\mathbf{x}\in\mathbb{R}^{n}:\|\mathbf{x} \|_{\infty}\leq1\}=[-1,1]^{n},$$
the $n$-dimensional unit cube and denote the vertices of $C_{n}$ by $\{-1,1\}^{n}$.

In 2009, L. Yu and C. Zong\cite{Yu} studied Problem 1 and obtained that $b_{C_{3,p}}(X)\leq 2^3$ holds for all bounded sets $X$ in every three-dimensional $\ell_{p}$ space. In 2021, Y. Lian and S. Wu\cite{Lian} showed that each set $X$ having diameter $1$ in three-dimensional $\ell_{p}$ space can be represented as the union of $2^3$ subsets of $X$ whose diameters are at most $0.925$.
In this paper, we continue studing the above problem in $\mathbb{M}_{p}^{4}$. Our main result is:
\begin{thm}\label{thm1.1}
 In every four-dimensional $\ell_{p}$ space
 $$b_{C_{4,p}}(X)\leq 2^4$$
 holds for all bounded sets $X$.
\end{thm}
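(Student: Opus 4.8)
The plan is to reduce the statement, via the Boltyanski--Gohberg inequality \eqref{eq0.1} together with a completion argument, to a bound on the covering number of diametrically complete bodies in $\mathbb{M}_p^4$, and then to estimate that covering number through the Banach--Mazur distance between $C_{4,p}$ and the cube $C_4$. Concretely, let $X\subseteq\mathbb{M}_p^4$ be bounded; normalising $d_{C_{4,p}}(X)=1$, enclose $X$ in a set $D\supseteq X$ which is diametrically complete with respect to $C_{4,p}$ and still has diameter $1$. Such a completion always exists, and $D=\bigcap_{x\in D}(x+C_{4,p})$ is a convex body; hence any partition of $D$ into pieces of $\ell_p$-diameter $<1$ restricts to one of $X$ with no more pieces, so that $b_{C_{4,p}}(X)\le b_{C_{4,p}}(D)\le\gamma(D)$ by \eqref{eq0.1}. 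It therefore suffices to prove $\gamma(D)\le 2^4$, equivalently that the covering functional satisfies $\Gamma_{16}(D)<1$, where $\Gamma_m(K)$ denotes the least $\lambda>0$ with $K$ contained in a union of $m$ translates of $\lambda K$, for every such $D$.

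When $C_{4,p}$ is not too far from the cube this follows from the Banach--Mazur estimates of the preceding section. The crude inclusions $C_{4,p}\subseteq C_4\subseteq 4^{1/p}C_{4,p}$ give $d_{BM}(C_{4,p},C_4)\le 4^{1/p}$, which is strictly below $2$ for $p>2$; for $1\le p<2$ the identity position is no longer sufficient, and this is exactly the point at which the earlier discussion is needed, namely to exhibit a better affine image and conclude that $d_{BM}(C_{4,p},C_4)<2$ on the whole interval $1\le p<2$ as well. Granting $d_{BM}(C_{4,p},C_4)<2$, one controls the circumradius and inradius of a $C_{4,p}$-complete body $D$ with respect to $C_{4,p}$ --- i.e. the Jung-type constants of $\mathbb{M}_p^4$ --- to see that $D$ lies, after an affine map, between two homothetic cubes of ratio $<2$, so that $d_{BM}(D,C_4)<2$; then, since $\Gamma_{16}(C_4)=\tfrac12$ (witnessed by the sixteen ``corner'' subcubes) and $\Gamma_m(K)\le d_{BM}(K,L)\,\Gamma_m(L)$ for any two convex bodies, we obtain $\Gamma_{16}(D)\le d_{BM}(D,C_4)\cdot\tfrac12<1$. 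The endpoint $p=\infty$ is disposed of separately and trivially: then $D$ is a parallelotope and $\Gamma_{16}(D)=\tfrac12$.

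The remaining regime --- $p$ at or very near $2$ --- is the main obstacle, because $d_{BM}(\ell_2^4,\ell_\infty^4)=\sqrt4=2$, so the comparison with the cube genuinely breaks down there and one cannot route through $C_4$. The plan in this case is to study the $C_{4,p}$-complete bodies $D$ directly: bound their circumradius with respect to $C_{4,p}$, use the smoothness and strict convexity of $C_{4,p}$ when $1<p<\infty$ (and the explicit facet structure of the cross-polytope when $p=1$), and exploit the coordinate symmetries of $C_{4,p}$, in order to write down an explicit family of at most $16$ translates of $\lambda D$ with $\lambda<1$ covering $D$ --- or, what already suffices for Theorem~\ref{thm1.1}, a direct partition of $D$ into $16$ subsets of $\ell_p$-diameter $<1$. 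I expect this to be the hard part: for $p$ near $2$ one is effectively asking to partition near-Euclidean complete bodies into $2^4$ pieces of smaller diameter, and since the Euclidean version of such a statement already sits at the edge of what is known for Borsuk's and Hadwiger's problems in $\mathbb{E}^4$, the argument has to use the extra rigidity supplied by the $\ell_p$ coordinate structure rather than treating $D$ as an arbitrary convex body. Putting the two regimes together over $p\in[1,\infty]$ and invoking the reduction of the first paragraph then yields Theorem~\ref{thm1.1}.
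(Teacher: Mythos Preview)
Your proposal has two genuine gaps, and its overall architecture differs from the paper's.

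\textbf{First gap.} The step ``$d_{BM}(C_{4,p},C_4)<2 \Rightarrow d_{BM}(D,C_4)<2$ for every $C_{4,p}$-complete body $D$'' is not justified and, as stated, cannot be made to work. Completeness of $D$ gives you information about widths of $D$ in directions normal to smooth points of $C_{4,p}$ (Lemma~\ref{lem2}); it gives no a priori control on how $D$ sits between two homothetic parallelotopes. Invoking ``Jung-type constants'' does not help: those bound the circumradius of $D$ with respect to $C_{4,p}$, not the ratio of circumscribed to inscribed cubes. Moreover, the prerequisite itself fails on the range you need: for $1<p\le 2$, Lemma~\ref{lem1} and Remark~\ref{rem1} only yield $d_{BM}(C_{4,p},C_4)\le 2$, not strict inequality, and at $p=2$ one has equality. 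So the whole interval $1<p\le 2$, not just ``$p$ at or very near $2$'', falls outside your cube-comparison argument.

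\textbf{Second gap.} For this residual regime you offer only a programme (``study the complete bodies directly \ldots\ write down an explicit family of at most $16$ translates''), together with the admission that this looks like the Euclidean Borsuk/Hadwiger problem in $\mathbb{E}^4$. That is not a proof; it is precisely the point at which an idea is required.

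\textbf{What the paper actually does.} The paper does \emph{not} route the hard range through $\Gamma_{16}(D)$ and Banach--Mazur distance of $D$. For $1<p\le 2$ it uses the Hadamard parallelotope $Q=gC_4$ with $\tfrac12 Q\subseteq C_{4,p}\subseteq Q$ (so ratio exactly $2$, not $<2$), places $X$ in the smallest translate of a scaled $Q$, and dissects into the $16$ corner copies of $\tfrac12 Q$. These pieces have $\ell_p$-diameter exactly $d_{C_{4,p}}(X)$, but that diameter is realised only at pairs of opposite vertices of $\tfrac12 Q$; a short combinatorial check shows that in every such pair at least one vertex lies on a lower-dimensional face of $Q$, while strict convexity of $C_{4,p}$ forces $X$ to meet each facet of $Q$ in a single relative-interior point. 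Hence $X$ avoids at least one endpoint of every diametral pair, and each of the $16$ pieces of $X$ has strictly smaller diameter. For $p=1$ strict convexity is unavailable, and the paper switches to the Moreno--Schneider description of complete sets in polytopal norms (Lemmas~\ref{lem2}, \ref{lem3}) together with the explicit bound $\gamma_{8}(C_{4,1})\le (3/4)<1$ from Lemma~\ref{lem5}. Your proposal contains neither of these mechanisms.
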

In order to prove this theorem, we rely on the Banach-Mazur distance. The Banach-Mazur distance between two $\mathbf{o}$-symmetric convex bodies $K$ and $L$ is defined as
$$d_{BM}(K,L)=\min\{r>0:K\subset gL\subset rK,g\in GL(n,\mathbb{R})\},$$
where $ GL(n,\mathbb{R})$ is the set of invertible linear operators.

In \cite{Xue}, F. Xue proved that
$$ \alpha\sqrt{n}\leq d_{BM}(C_{n},C_{n,1})\leq (\sqrt{2}+1)\sqrt{n}$$
and $\alpha$ can actually be improved to $\frac{1}{\sqrt{2}}$. Here, we generalize the results to $C_{n,p}$ as follows:
\begin{thm}\label{thm1.2}
$$ 2^{\frac{1}{2}-\frac{1}{p}}\sqrt{n}\leq d_{BM}(C_{n},C_{n,p})\leq (\sqrt{2}+1)\sqrt{n}$$ for $1\leq p<2$.
\end{thm}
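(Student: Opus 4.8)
The plan is to prove the two inequalities by separate, essentially classical arguments. Throughout let $q$ denote the conjugate exponent of $p$ (so $\tfrac1p+\tfrac1q=1$ and $2<q\le\infty$), and I will repeatedly use the elementary comparisons $\|x\|_2\le\|x\|_p\le n^{1/p-1/2}\|x\|_2$, valid for all $x\in\mathbb R^n$ when $1\le p\le 2$.

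\emph{Upper bound.} It is enough to exhibit one invertible $g$ with $C_n\subseteq gC_{n,p}\subseteq(\sqrt2+1)\sqrt n\,C_n$. When $n$ admits a Hadamard matrix $H$ (so $H^{T}H=nI$), I would take $g=n^{1/p-1/2}H$: then $g^{-1}C_n=n^{-1/2-1/p}H^{T}C_n$ is a parallelepiped whose vertices $n^{-1/2-1/p}H^{T}\epsilon$ satisfy $\|H^{T}\epsilon\|_p\le n^{1/p-1/2}\|H^{T}\epsilon\|_2=n^{1/p-1/2}\cdot n=n^{1/2+1/p}$, hence lie in $C_{n,p}$, so $C_n\subseteq gC_{n,p}$; while, the rows of $H$ being $\pm1$ vectors, $\sup_{\|x\|_p\le1}\|Hx\|_\infty=\max_i\|H_{(i)}\|_q=n^{1/q}=n^{1-1/p}$, giving $gC_{n,p}\subseteq n^{1/p-1/2}\cdot n^{1-1/p}\,C_n=\sqrt n\,C_n$. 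For general $n$ I would substitute for $H$ the near-Hadamard $\{-1,1\}$-matrix provided by \cite{Xue}, whose defining feature is that $\sigma_{\min}$ remains of order $\sqrt n$; the same computation then yields the stated bound with the constant inflated from $1$ to $\sqrt2+1$. The structural point that makes $1<p<2$ cost nothing beyond $p=1$ is that the factor $n^{1-1/p}$ coming from the $\ell_q$-norm of the rows of the matrix exactly cancels the factor $n^{1/p}$ produced by the $\ell_p$-versus-$\ell_2$ comparison, so all $p$-dependent powers of $n$ disappear and one is reduced verbatim to the matrix estimate in \cite{Xue}.

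\emph{Lower bound.} I would fix an arbitrary $g\in GL(n,\mathbb R)$ with $C_n\subseteq gC_{n,p}\subseteq rC_n$ and deduce $r\ge 2^{1/2-1/p}\sqrt n$. Write $v_1,\dots,v_n$ for the rows and $w_1,\dots,w_n$ for the columns of $g^{-1}$. Every vertex $\epsilon\in\{-1,1\}^n$ of $C_n$ lies in $gC_{n,p}$, so $\|g^{-1}\epsilon\|_p\le1$; summing $p$-th powers over the $2^n$ vertices and expanding coordinatewise,
\[
2^{n}\ \ge\ \sum_{\epsilon\in\{-1,1\}^n}\|g^{-1}\epsilon\|_p^{p}\ =\ \sum_{k=1}^{n}\ \sum_{\epsilon\in\{-1,1\}^n}|\langle v_k,\epsilon\rangle|^{p}\ \ge\ 2^{n}\,2^{p/2-1}\sum_{k=1}^{n}\|v_k\|_2^{p},
\]
the last inequality being Khintchine's inequality. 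Hence $\sum_k\|v_k\|_2^{p}\le 2^{1-p/2}$, so $\sum_k\|v_k\|_p^{p}\le n^{1-p/2}\sum_k\|v_k\|_2^{p}\le(2n)^{1-p/2}$. On the other hand, $gC_{n,p}\subseteq rC_n$ says precisely that each row $g_{(k)}$ of $g$ obeys $\|g_{(k)}\|_q\le r$ (dual-norm description of the containment); combining this with $gg^{-1}=I$ and H\"older, $1=\langle g_{(k)},w_k\rangle\le\|g_{(k)}\|_q\|w_k\|_p\le r\|w_k\|_p$, so that $n\,r^{-p}\le\sum_k\|w_k\|_p^{p}=\sum_k\|v_k\|_p^{p}\le(2n)^{1-p/2}$. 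Rearranging gives $r^{p}\ge 2^{p/2-1}n^{p/2}$, i.e. $r\ge 2^{1/2-1/p}\sqrt n$ (at $p=1$ this recovers the constant $1/\sqrt2$).

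\emph{The main obstacle.} Every step above is routine bookkeeping except the Khintchine estimate $\sum_{\epsilon\in\{-1,1\}^n}|\langle v,\epsilon\rangle|^{p}\ge 2^{n}\,2^{p/2-1}\|v\|_2^{p}$ with the constant $2^{1/2-1/p}$. That this constant cannot be improved is immediate from $v=(1,1,0,\dots,0)$; that it is actually attained --- equivalently, that the two-point configuration is extremal in Khintchine's inequality --- is Haagerup's theorem, valid for $1\le p\le p_0$ with $p_0\approx 1.8474$ the root of $\Gamma(\tfrac{p+1}{2})=\tfrac{\sqrt\pi}{2}$; for $p_0<p<2$ the Gaussian configuration is extremal and the best constant is marginally smaller, so in that subinterval one must either use that (slightly smaller) sharp constant or note that the loss is harmless for the intended application, Theorem \ref{thm1.1}, whose input is a $p$-uniform bound anyway. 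Pinning down this Khintchine constant is where the real content sits; the remainder is H\"older's inequality together with the two norm comparisons recorded at the outset.
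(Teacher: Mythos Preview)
Your upper bound is the same idea as the paper's --- Hadamard matrices in dimensions where they exist, then the block construction from \cite{Xue} for the rest --- only sketched rather than written out; the paper makes the induction explicit, but there is no substantive difference. (One quibble: the block matrix used for general $n$ is not literally a $\{-1,1\}$-matrix, so ``near-Hadamard $\{-1,1\}$-matrix'' is imprecise; but your reduction remark that the $p$-dependent powers cancel so the $p=1$ estimate of \cite{Xue} transfers unchanged is exactly the point.)

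Your lower bound, however, is a genuinely different argument. The paper normalises $g=\det(g)^{1/n}N$ with $N\in SL(n,\mathbb R)$, bounds $\det(g^{-1})$ via Hadamard's inequality and a power-mean comparison, then applies Khintchine to the rows of $N$ together with the AM--GM inequality. You bypass the determinant entirely: after Khintchine on the rows $v_k$ of $g^{-1}$ you use the entrywise identity $\sum_k\|w_k\|_p^p=\sum_k\|v_k\|_p^p$ (columns versus rows) and the diagonal relation $1=\langle g_{(k)},w_k\rangle\le r\|w_k\|_p$ to close the loop. This is correct and arguably cleaner --- it avoids the three auxiliary inequalities (Hadamard, power-mean, AM--GM) that the paper stacks together --- at the price of the one extra step $\|v_k\|_p\le n^{1/p-1/2}\|v_k\|_2$. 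Both proofs lean on Khintchine with the same constant, so the Haagerup caveat you raise about $p_0<p<2$ applies equally to the paper's version; your flagging of it is a point in your favour rather than a gap in your argument.
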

This paper is organized as follows: In Section 2, we study the Banach-Mazur distance between $C_{n}$ and $C_{n,p}$ $(1\leq p\leq 2)$; In Section 3, we prove Theorem \ref{thm1.1}.

\section{On the Banach-Mazur distance between the cube and the $\ell_{p}$ ball}

In order to find the Banach-Mazur distance between $C_{n}$ and $C_{n,p}$ $(1\leq p\leq 2)$ , one needs to find the minimum $r>0$ such that there exists $g\in GL(n,\mathbb{R})$ with
\begin{equation}\label{eq0}
  C_{n,p}\subset gC_{n}\subset rC_{n,p}.
\end{equation}
Assume that $g$ is the linear transformation with row vectors $\mathbf{c}_{1},\mathbf{c}_{2},...,\mathbf{c}_{n}$, i.e. $g=(\mathbf{c}_{i})_{n\times 1}$.
Since $C_{n}=\mathrm{conv}\{\Sigma_{i=1}^{n}\sigma_{i}\mathbf{e}_{i}:\sigma_{i}\in\{-1,1\},i=1,...,n\}=\mathrm{conv}\{\mathbf{v}:\mathbf{v}=(\pm1,\pm1,...,\pm1)^{T}\}$ , where $\mathbf{e}_{i}$ is the $i$-th unit vector, we have
 $$ gC_{n}=\mathrm{conv}\mathrm\{g\mathbf{v},\mathbf{v}\in\{-1,1\}^{n} \}.$$
The maximum distance between any vertex of $gC_{n}$ and $\mathbf{o}$ is
$$\mathop{\max}\limits_{\mathbf{v}\in\{-1,1\}^{n}}\|g\mathbf{v}\|_{p}=\mathop{\max}\limits_{\mathbf{v}\in\{-1,1\}^{n}}\|(\langle\mathbf{c}_{1},\mathbf{v}\rangle, \langle\mathbf{c}_{2},\mathbf{v}\rangle,...,\langle\mathbf{c}_{n},\mathbf{v}\rangle)^{T}\|_{p}.$$
Here we have
\begin{equation}\label{eq1}
 gC_{n}\subset\left(\mathop{\max}\limits_{\mathbf{v}\in\{-1,1\}^{n}}\|g\mathbf{v}\|_{p}\right)C_{n,p}.
\end{equation}

From the left part in \eqref{eq0}, we have
 \begin{equation*}
g^{-1}C_{n,1}= g^{-1}C_{n}^{\ast}=( gC_{n})^{\ast}\subset C_{n,p}^{\ast}=C_{n,q},\  where\  \frac{1}{p}+\frac{1}{q}=1.
 \end{equation*}
 Then
 \begin{equation}\label{eq2}
 \mathop{\max}\limits_{i}\|g^{-1}\mathbf{e}_{i}\|_{q}\leq 1.
 \end{equation}
Combining \eqref{eq2} with \eqref{eq1}, we have
\begin{equation}\label{eq3}
d_{BM}(C_{n},C_{n,p})=r=\mathop{\min}\limits_{g }\mathop{\max}\limits_{\mathbf{v}\in\{-1,1\}^{n}}\|g\mathbf{v}\|_{p},
\end{equation}
where $g\in GL(n,\mathbb{R})$ and $\mathop{\max}\limits_{i}\|g^{-1}\mathbf{e}_{i}\|_{q}\leq 1$.

In order to show the upper bound of the distance, we use the Hadamard matrix. A Hadamard matrix is a square matrix whose entries are either $+1$ or $-1$, and whose rows are mutually orthogonal.
Sylvester \cite{SYLVESTER} provided one way to construct Hadamard matrices. Let
$$H_{1}=(1)$$
$$H_{2}=\begin{pmatrix}
1&1\\
1&-1
\end{pmatrix}$$

$$H_{2^{k}}=\begin{pmatrix}
H_{2^{k-1}}&H_{2^{k-1}}\\
H_{2^{k-1}}&-H_{2^{k-1}}
\end{pmatrix}$$
for $k\geq2$, then $H_{2^{k}}$ are all Hadamard matrices.

The Hadamard conjecture proposes that a Hadamard matrix of order $4k$ exists for every positive integer $k$. So far this conjecture is still open.
\begin{lem}\label{lem1}
 In dimension $n=2^{k}$, $k\in \mathbb{Z}^{+}$, we have $d_{BM}(C_{n},C_{n,p})\leq \sqrt{n}$ for $1\leq p\leq 2$.
\end{lem}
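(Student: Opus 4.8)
The plan is to feed the min–max characterization \eqref{eq3} an explicit admissible $g$ built from the Sylvester Hadamard matrix $H=H_{n}$ of order $n=2^{k}$. Since the rows of $H$ are mutually orthogonal $\{-1,+1\}$-vectors, each of Euclidean length $\sqrt{n}$, we have $HH^{T}=nI_{n}$; hence $H\in GL(n,\mathbb{R})$ with $H^{-1}=\tfrac1n H^{T}$. I would take
$$g=n^{-1/p}H,\qquad\text{so that}\qquad g^{-1}=n^{\frac1p-1}H^{T}.$$

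First I would verify the side condition $\max_{i}\|g^{-1}\mathbf{e}_{i}\|_{q}\leq1$ from \eqref{eq3}, where $\tfrac1p+\tfrac1q=1$. The vector $g^{-1}\mathbf{e}_{i}=n^{\frac1p-1}H^{T}\mathbf{e}_{i}$ is $n^{\frac1p-1}$ times the $i$-th column of $H^{T}$, i.e. the $i$-th row of $H$, which is a $\{-1,+1\}$-vector and therefore has $\ell_{q}$-norm $n^{1/q}$. Consequently $\|g^{-1}\mathbf{e}_{i}\|_{q}=n^{\frac1p-1}\cdot n^{1/q}=n^{\frac1p+\frac1q-1}=1$ for every $i$, so the constraint is met (with equality).

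Next I would estimate $\max_{\mathbf{v}\in\{-1,1\}^{n}}\|g\mathbf{v}\|_{p}$. For any vertex $\mathbf{v}\in\{-1,1\}^{n}$ one has $\|H\mathbf{v}\|_{2}^{2}=\mathbf{v}^{T}H^{T}H\mathbf{v}=n\,\mathbf{v}^{T}\mathbf{v}=n\cdot n=n^{2}$, so $\|H\mathbf{v}\|_{2}=n$ independently of $\mathbf{v}$. Since $1\leq p\leq2$, Hölder's inequality yields the comparison $\|\mathbf{x}\|_{p}\leq n^{\frac1p-\frac12}\|\mathbf{x}\|_{2}$ valid for all $\mathbf{x}\in\mathbb{E}^{n}$; applied to $\mathbf{x}=H\mathbf{v}$ this gives $\|H\mathbf{v}\|_{p}\leq n^{\frac1p-\frac12}\cdot n=n^{\frac1p+\frac12}$. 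Therefore $\|g\mathbf{v}\|_{p}=n^{-1/p}\|H\mathbf{v}\|_{p}\leq n^{1/2}=\sqrt{n}$ uniformly over all vertices $\mathbf{v}$, and substituting this admissible $g$ into \eqref{eq3} gives $d_{BM}(C_{n},C_{n,p})\leq\sqrt{n}$.

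As for obstacles: once the correct normalization $n^{-1/p}$ is identified the argument is purely computational, so there is essentially no hurdle here — the only delicate points are making the exponents in the constraint check cancel exactly and invoking the $\ell_{p}$–$\ell_{2}$ comparison in the right direction, which is the favorable one precisely because $p\leq2$. The genuine difficulty of Theorem \ref{thm1.2}, namely the matching lower bound $2^{\frac12-\frac1p}\sqrt{n}$ and the removal of the restriction $n=2^{k}$ in the upper bound, lies outside this lemma and is handled separately.
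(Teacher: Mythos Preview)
Your proof is correct and follows essentially the same approach as the paper: the same choice $g=n^{-1/p}H_{n}$, the same verification that $\|g^{-1}\mathbf{e}_{i}\|_{q}=1$, and the same use of $H^{T}H=nI$ together with the H\"older comparison $\|\cdot\|_{p}\leq n^{1/p-1/2}\|\cdot\|_{2}$ to bound $\|g\mathbf{v}\|_{p}$ by $\sqrt{n}$. The only cosmetic difference is that the paper writes the H\"older step coordinatewise on the inner products $\langle\mathbf r_{i},\mathbf v\rangle$, while you package it as the norm inequality applied directly to $H\mathbf{v}$.
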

\begin{proof}
 In dimension $n=2^{k}$, there exists a Hadamard matrix $H_{n}$ with row vectors $\mathbf{r}_{1},\mathbf{r}_{2},...,\mathbf{r}_{n}$. Choose the matrix $g=\frac{1}{n^{\frac{1}{p}}}H_{n}$ , and $H_{n}^{-1}=\frac{1}{n}H_{n}^{T}$. Then we have
$$\mathop{\max}\limits_{i}\|g^{-1}\mathbf{e}_{i}\|_{q}=\mathop{\max}\limits_{i}\|n^{\frac{1}{p}}\cdot\frac{1}{n}H_{n}^{T}\mathbf{e}_{i}\|_{q}=n^{\frac{1}{p}}\cdot\frac{1}{n}\cdot n^{\frac{1}{q}}=1.$$
Thus, we have \begin{align*}
               r &= \mathop{\max}\limits_{\mathbf{v}\in\{-1,1\}^{n}}\|g\mathbf{v}\|_{p}\\ &=\frac{1}{n^{\frac{1}{p}}}\mathop{\max}\limits_{\mathbf{v}\in\{-1,1\}^{n}}\|H_{n}\mathbf{v}\|_{p}\\
                 &=\frac{1}{n^{\frac{1}{p}}}\mathop{\max}\limits_{\mathbf{v}\in\{-1,1\}^{n}}\|(\langle\mathbf{r}_{1},\mathbf{v}\rangle, \langle\mathbf{r}_{2},\mathbf{v}\rangle,...,\langle\mathbf{r}_{n},\mathbf{v}\rangle)^{T}\|_{p} \\
                 & =\frac{1}{n^{\frac{1}{p}}}\mathop{\max}\limits_{\mathbf{v}\in\{-1,1\}^{n}}(|\langle\mathbf{r}_{1},\mathbf{v}\rangle|^{p}+ |\langle\mathbf{r}_{2},\mathbf{v}\rangle|^{p}+...+|\langle\mathbf{r}_{n},\mathbf{v}\rangle|^{p})^{\frac{1}{p}}\\
                  & \leq \frac{1}{n^{\frac{1}{p}}}\mathop{\max}\limits_{\mathbf{v}\in\{-1,1\}^{n}}\left((\langle\mathbf{r}_{1},\mathbf{v}\rangle^{2}+ \langle\mathbf{r}_{2},\mathbf{v}\rangle^{2}+...+\langle\mathbf{r}_{n},\mathbf{v}\rangle^{2})^{\frac{p}{2}}\cdot n^{\frac{2-p}{2}}\right)^{\frac{1}{p}} \ \ \ (H\ddot{o}lder \ \ Inequality) \\
                  & =\frac{1}{n^{\frac{1}{p}}}\mathop{\max}\limits_{\mathbf{v}\in\{-1,1\}^{n}}\left((n\cdot\|\mathbf{v}\|_{2}^{2})^{\frac{p}{2}}\cdot n^{\frac{2-p}{2}}\right)^{\frac{1}{p}}\\
                  & =\frac{1}{n^{\frac{1}{p}}}(n^{p}\cdot n^{{\frac{2-p}{2}}})^{\frac{1}{p}}\\
                  & =\sqrt{n}
              \end{align*}
\end{proof}
\begin{rem}\label{rem1}
By Lemma \ref{lem1}, let $\sigma(p)$ denote the smallest number such that there exists a parallelotope $P=gC_{4}$ satisfying
 $P\subseteq C_{4,p}\subseteq \sigma(p)P$. When we choose $g$ to be $\frac{1}{4^{\frac{1}{p}+\frac{1}{2}}}H_{4}$, where $H_{4}$ is a Hadamard matrix, then we have $\sigma(p)\leq 2$.
\end{rem}
{\noindent\bfseries Proof of the upper bound in Theorem \ref{thm1.2}.}
By induction, assume that in dimension $t\leq 2^{k}$ the upper bound of the distance is not bigger than $(\sqrt{2}+1)\sqrt{t}$ with cube determined by $g_{t}$. That is to say, we have $$C_{t,p}\subset g_{t}C_{t}\subset rC_{t,p}$$ and $$r=\mathop{\max}\limits_{\mathbf{v}\in\{-1,1\}^{t}}\|g_{t}\mathbf{v}\|_{p}\leq (\sqrt{2}+1)\sqrt{t}$$
with $$\mathop{\max}\limits_{1\leq i\leq t}\|g_{t}^{-1}\mathbf{e}_{i}\|_{q}\leq 1.$$

Then in dimension $n=2^{k}+t$ where $t\leq 2^{k}$, let
$$g_{2^{k}+t}=\begin{pmatrix}
\frac{1}{(2^{k})^{\frac{1}{p}}}H_{2^{k}}&0\\
0&g_{t}
\end{pmatrix}
.$$
Let $e=\mathop{\max}\limits_{1\leq i\leq 2^{k} }\left\|\left(\frac{1}{(2^{k})^{\frac{1}{p}}}H_{2^{k}}\right)^{-1}\mathbf{e}_{i}\right\|_{q}$, $f=\mathop{\max}\limits_{1\leq i\leq t}\|(g_{t})^{-1}\mathbf{e}_{i}\|_{q}$, obviously, we have $f=e=1$ and
$$\mathop{\max}\limits_{i}\left\|\left(g_{2^{k}+t}\right)^{-1}\mathbf{e}_{i}\right\|_{q}=\max\{e,f\}=1.$$
By Lemma \ref{lem1}, the distance is therefore
\begin{align*}
  \mathop{\max}\limits_{\mathbf{v}\in\{-1,1\}^{n}}\|g_{2^{k}+t}\mathbf{v}\|_{p}
  & \leq\mathop{\max}\limits_{\mathbf{v}\in\{-1,1\}^{2^{k}}}\left\|\frac{1}{(2^{k})^{\frac{1}{p}}}H_{2^{k}}\mathbf{v}\right\|_{p}+\mathop{\max}\limits_{\mathbf{v}\in\{-1,1\}^{t}}\|g_{t}\mathbf{v}\|_{p}\\
  & \leq \sqrt{(2^{k})}+\mathop{\max}\limits_{\mathbf{v}\in\{-1,1\}^{t}}\|g_{t}\mathbf{v}\|_{p}\\
  & = \sqrt{(2^{k})}+(\sqrt{2}+1)\sqrt{t}\\
  & \leq (\sqrt{2}+1)\sqrt{2^{k}+t}\\
  & =(\sqrt{2}+1)\sqrt{n}
\end{align*}
The proof for the upper bound is finished.\qed

The Hadamard conjecture predicts the existence of a Hadamard matrix in dimension $n=4k$. When there indeed exists a Hadamard matrix $H_{n}$, $n=4k$. The distance between $C_{n,p}$ and the cube determined by $H_{n}$ will not be larger than $\sqrt{n}$ for $1\leq p\leq 2$.

When $n=4k+j$, $j<4$, let the cube be determined by
$$g_{4k+j}=\begin{pmatrix}
I_{j}&0\\
0&\frac{1}{({4k})^{\frac{1}{p}}}H_{4k},
\end{pmatrix}$$
then $$\mathop{\max}\limits_{i}\|\left(g_{4k+j})^{-1}\mathbf{e}_{i}\right\|_{q}=1,$$
so the distance is
\begin{align*}
                r &= \mathop{\max}\limits_{\mathbf{v}\in\{-1,1\}^{n}}\|g_{4k+j}\mathbf{v}\|_{p}\\
                 & \leq\mathop{\max}\limits_{\mathbf{v}\in\{-1,1\}^{4k}}\left\|\frac{1}{({4k})^{\frac{1}{p}}}H_{4k}\mathbf{v}\right\|_{p}+\mathop{\max}\limits_{\mathbf{v}\in\{-1,1\}^{j}}\|I_{j}\mathbf{v}\|_{p}\\
                 & \leq\sqrt{4k}+j^{\frac{1}{p}}\\
                  & <\sqrt{n}+3.
              \end{align*}
Therefore the upper bound will be $\sqrt{n}+3$ for all $n$ if the Hadamard conjecture is ture.

{\noindent\bfseries Proof of the lower bound in Theorem \ref{thm1.2}.}
Recall that
 $$d_{BM}(C_{n},C_{n,p})=\mathop{\min}\limits_{g}\mathop{\max}\limits_{\mathbf{v}\in\{-1,1\}^{n}}\|g\mathbf{v}\|_{p},$$
where $g\in GL(n,\mathbb{R})$ and $\mathop{\max}\limits_{i}\|g^{-1}\mathbf{e}_{i}\|_{q}\leq 1$. Without loss of generality, we consider $\mathrm{det}(g)>0$ and write $g=\mathrm{det}(g)^{\frac{1}{n}}N$, where $N\in SL(n,\mathbb{R})$.

By using the Power Mean Inequality and $\|g^{-1}\mathbf{e}_{i}\|_{q}\leq 1$ for $i=1,...,n$ and $q\geq 2$, we have
 \begin{equation}\label{eq4}
\mathrm{det}(g^{-1})\leq \prod_{i=1}^{n}\|g^{-1}\mathbf{e}_{i}\|_{2}\leq n^{\frac{n}{2}}\prod_{i=1}^{n}\left(\frac{\|g^{-1}\mathbf{e}_{i}\|_{q}^{q}}{n}\right)^{\frac{1}{q}}=n^{\frac{n}{2}-\frac{n}{q}}.
 \end{equation}
Let the row vectors of $N$ be $N_{j}$, i.e. $N=(N_{j})_{n\times1}$ , then we have
$$\|N\mathbf{v}\|_{p}=(|\langle N_{1},\mathbf{v}\rangle|^{p}+...+|\langle N_{n},\mathbf{v}\rangle|^{p})^{\frac{1}{p}}.$$
Also, since $\mathrm{det}(N)=1$, by the definition of determinant,
$$\prod_{i=1}^{n}\|N_{j}\|_{2}\geq1,$$
and by the Arithmetic-geometric Inequality,
\begin{equation}\label{eq5}
\left(\frac{\sum_{j=1}^{n}\|N_{j}\|_{2}^{p}}{n}\right)^{\frac{1}{p}}\geq1.
 \end{equation}
By the  Khintchine Inequality,
$$\left(\frac{1}{2^{n}}\mathop{\sum}\limits_{\mathbf{v}\in\{-1,1\}^{n}}|\langle \mathbf{u},\mathbf{v}\rangle|^{p}\right)^{\frac{1}{p}}\geq 2^{\frac{1}{2}-\frac{1}{p}}\|\mathbf{u}\|_{2},$$
 and
  \begin{equation}\label{eq6}
  \frac{1}{2^{n}}\mathop{\sum}\limits_{\mathbf{v}\in\{-1,1\}^{n}}|\langle \mathbf{u},\mathbf{v}\rangle|^{p}\geq 2^{\frac{p}{2}-1}\|\mathbf{u}\|_{2}^{p},
  \end{equation}
Thus,
\begin{align*}
                \mathop{\max}\limits_{\mathbf{v}\in\{-1,1\}^{n}}\|g\mathbf{v}\|_{p} &= \mathrm{det}(g)^{\frac{1}{n}}\mathop{\max}\limits_{\mathbf{v}\in\{-1,1\}^{n}}\|N\mathbf{v}\|_{p}\\
                 & =\mathrm{det}(g)^{\frac{1}{n}}\left(\mathop{\max}\limits_{\mathbf{v}\in\{-1,1\}^{n}}\sum_{j=1}^{n}|\langle N_{j},\mathbf{v}\rangle|^{p}\right)^{\frac{1}{p}}\\
                 & \geq \mathrm{det}(g)^{\frac{1}{n}}\left(\frac{1}{2^{n}}\mathop{\sum}\limits_{\mathbf{v}\in\{-1,1\}^{n}}\sum_{j=1}^{n}|\langle N_{j},\mathbf{v}\rangle|^{p}\right)^{\frac{1}{p}}\\
                  & =\mathrm{det}(g)^{\frac{1}{n}}\left(\frac{1}{2^{n}}\sum_{j=1}^{n}\mathop{\sum}\limits_{\mathbf{v}\in\{-1,1\}^{n}}|\langle N_{j},\mathbf{v}\rangle|^{p}\right)^{\frac{1}{p}}\\
                  &\geq \mathrm{det}(g)^{\frac{1}{n}}\left(2^{\frac{p}{2}-1}\sum_{j=1}^{n}\|N_{j}\|_{2}^{p}\right)^{\frac{1}{p}}\ \ \ \ \ \ \ \  \eqref{eq6}\\
                   &\geq  \mathrm{det}(g)^{\frac{1}{n}}\left(2^{\frac{p}{2}-1}\cdot n\right)^{\frac{1}{p}}\ \ \ \ \ \ \ \ \ \ \ \ \ \  \eqref{eq5}\\
                    &\geq \frac{1}{n^{\frac{1}{p}-\frac{1}{2}}}\cdot n^{\frac{1}{p}}\cdot 2^{\frac{1}{2}-\frac{1}{p}}\ \ \ \ \ \ \ \ \ \ \ \ \ \  \ \ \eqref{eq4}\\
                    &=2^{\frac{1}{2}-\frac{1}{p}}\cdot\sqrt{n}.
              \end{align*}
The proof for the lower bound is finished.\qed

\section{Borsuk's partition problem in four-dimensional $\ell_{p}$ space}
In order to prove Theorem \ref{thm1.1}, let us consider three main situations.

\subsection{$p>2$}

If $p>2$, there exists a parallelotope $P=C_{4}$ satisfying
             \begin{equation}
             \frac{1}{4^{\frac{1}{p}}}P\subseteq C_{4,p}\subseteq P.\label{eq4.1}
             \end{equation}
    For each bounded set $X$, there is a point $\mathbf{x}$ and a minimal number $\tau$ such that
    $$X+\mathbf{x}\subseteq \tau P.$$
   Since
    $$d_{C_{4,p}}(X+\mathbf{x})=d_{C_{4,p}}(X),$$
    and $X+\mathbf{x}$ touches at least one pair of opposite facets of $\tau P$, we have
    \begin{equation}
    d_{C_{4,p}}(X)\geq 2\tau.\label{eq4.2}
    \end{equation}
    On the other hand, $\tau P$ can be covered by exactly $16$ translates of $\frac{\tau}{2}P$. By \eqref{eq4.1} it follows that $\tau P$ can be covered by $16$ translates of $\frac{\tau 4^{\frac{1}{p}}}{2}C_{4,p}$ and $X$ can be divided into $16$ corresponding subsets $X_{1},X_{2},...,X_{16}$ with
    \begin{equation}
    d_{C_{4,p}}(X_{i})\leq d_{C_{4,p}}\left(\frac{\tau 4^{\frac{1}{p}}}{2}C_{4,p}\right)=4^{\frac{1}{p}}\tau<2\tau \label{eq4.3}
    \end{equation}
    for $i=1,...,16$. Therefore, \eqref{eq4.2} and \eqref{eq4.3} together yield $$b_{C_{4,p}}(X)\leq 2^4.$$

\begin{rem}
Using the same method, we can prove $b_{C_{n,p}}(X)\leq 2^n$ holds for all $\log_{2}n < p\leq+\infty,\ n\geq3$ and all bounded set $X$.
\end{rem}

\subsection{$1< p \leq 2$}
If $1< p \leq 2$, we recall from Lemma \ref{lem1} and Remark \ref{rem1} that
\begin{equation}\label{eq3.4}
 \frac{1}{2}Q\subseteq C_{4,p}\subseteq Q,
\end{equation}
where  $Q=gC_{4}$ and
$$g=\frac{1}{4^{\frac{1}{p}}}\begin{pmatrix}
1&-1&1&1\\
1&1&-1&1\\
1&1&1&-1\\
-1&1&1&1
\end{pmatrix}.
$$

Denote by $w(X,\mathbf{u})$ the Euclidean width of $X$ in the direction $\mathbf{u}$. Let $\mathbf{u}_{i}=g\mathbf{e}_{i}$, i.e. $\mathbf{u}_{1}=4^{-\frac{1}{p}}(1,1,1,-1)^{T}$, then $w(C_{4,p},\mathbf{u}_{i})=4^{1-\frac{1}{p}}$ for $i=1,...,4$.

For each bounded set $X$ with $d_{C_{4,p}}(X)=2$ in $M_{p}^{4}$, we have $w(X,\mathbf{u}_{i})\leq4^{1-\frac{1}{p}}$ for $i=1,...,4$ and the equality holds if and only if there exists $\mathbf{a}_{i},\mathbf{b}_{i}\in X$ such that
 \begin{equation}\label{eq3.5}
\mathbf{a}_{i}-\mathbf{b}_{i}=2 \mathbf{u}_{i}.
 \end{equation}

Up to translation, we may assume that $X\subseteq \cap_{i\in[4]}\{\mathbf{x}:|\langle \mathbf{x}, \mathbf{u}_{i}\rangle|\leq w_{i}\}=Q_{X}$ with $w_{i}\leq 4^{1-\frac{2}{p}}$. In fact, $Q=\cap_{i\in[4]}\{\mathbf{x}:|\langle \mathbf{x}, \mathbf{u}_{i}\rangle|\leq 4^{1-\frac{2}{p}}\}$.
Now we consider two cases:
\begin{enumerate}
  \item If there exists some $w_{i}<4^{1-\frac{2}{p}}$, then we have $X\subseteq Q_{X} \mathop{\subset}\limits_{\neq} Q$ and $d_{p}(Q_{X})<4$. In this case, one can divided $ Q_{X}$ into 16 smaller copies of $\frac{1}{2} Q_{X}$ with $d_{C_{4,p}}(\frac{1}{2}Q_{X})<2$. Then $X$ can also be divided into $16$ corresponding parts with diameter strictly smaller than $2$. Thus, $b_{C_{4,p}}(X)\leq 2^{4}$.
  \item If $w_{i}=4^{1-\frac{2}{p}}$ for all $i=1,..,4$, let $F_{i}=\{\mathbf{x}:\langle \mathbf{x}, \mathbf{u}_{i}\rangle= 4^{1-\frac{2}{p}}\}$ and $F_{-i}=\{\mathbf{x}:\langle \mathbf{x}, -\mathbf{u}_{i}\rangle= 4^{1-\frac{2}{p}}\}$, then $X$ touches each pair of opposite facets of $Q$. Assuming that $X$ touches $Q \cap F_{i}$ at one point $\mathbf{a}_{i}$ and touches $Q \cap F_{i}$ at point $\mathbf{b}_{i}$ satisfying \eqref{eq3.5}. In addition, since $C_{4,p}$ is strictly convex when $1<p\leq2$, then $X$ cannot touch $F_{i}$ (as well as $F_{-i}$ ) at more than one point.  Also, all $\mathbf{a}_{i}$, $\mathbf{b}_{i}$ must be in the interior of each facet of $Q$. If not, suppose $\mathbf{a}_{1}$ is on the boundary of one facet of $Q$. Without of loss generality, let $\mathbf{a}_{1}\in Q\cap F_{1}\cap F_{2}$, by $\mathbf{a}_{1}-\mathbf{b}_{1}=2 \mathbf{u}_{1}$, then $\mathbf{b}_{1}\in (Q\cap F_{-1}\cap F_{2})$. Since $d_{C_{4,p}}(X)=2$ and $\mathbf{a}_{1},\mathbf{b}_{1}\in (Q\cap F_{2})$, there is no point of $X$ on the opposite facet $(Q\cap F_{-2})$, which contradicts to the assumption that $X$ intersects all facets of $Q$.

 For $1<p\leq 2$, by the strictly convexity of $C_{4,p}$ and \eqref{eq3.4}, the diameter of $\frac{1}{2}Q$ in $M_{p}^{4}$ is only determined by its eight pairs of symmetric vertices: $$d_{C_{4,p}}\left(\frac{1}{2}Q\right)=2=d_{C_{4,p}}\left(\frac{1}{2}g\mathbf{v},\frac{1}{2}g(-\mathbf{v})\right),\ \ \mathbf{v}\in\{1,-1\}^{4}=\Sigma_{i=1}^{4}\delta_{i}\mathbf{e}_{i},\ \  \delta_{i}\in\{1,-1\}.$$

Now we still divided $Q$ into $16$ smaller copies of $\frac{1}{2} Q$, that is, $Q=\bigcup_{i=1}^{16}(\frac{1}{2} Q+\mathbf{y}_{i})$ with $\mathbf{y}_{i}\in \{\frac{1}{2}g\mathbf{v}:\mathbf{v}\in\{1,-1\}^{4}\}$. Then we also get 16 corresponding subsets $X_{i}=X\cap (\frac{1}{2} Q+\mathbf{y}_{i})$, $i=1,...,16$.
For every translating point pair $\left(\frac{1}{2}g\mathbf{v}+\mathbf{y}_{i},\frac{1}{2}g(-\mathbf{v})+\mathbf{y}_{i}\right)$, $i=1,...,16$, we will show that at least one point of $\left(\frac{1}{2}g\mathbf{v}+\mathbf{y}_{i},\frac{1}{2}g(-\mathbf{v})+\mathbf{y}_{i}\right)$ lies on the boundary of some facet of $Q$. Without loss of generality, take a point pair $\left(\frac{1}{2}g\mathbf{v}_{0},\frac{1}{2}g(-\mathbf{v}_{0})\right)$ with $\mathbf{v}_{0}=\Sigma_{i=1}^{4}\sigma_{i}\mathbf{e}_{i}$, $\sigma_{i}\in\{1,-1\}$.
Then
\begin{equation}\label{eq3.6}
  \frac{1}{2}g\mathbf{v}_{0}+\mathbf{y}_{i}=\frac{1}{2}g(\Sigma_{i=1}^{4}(\sigma_{i}+\delta_{i})\mathbf{e}_{i}),
\end{equation}
\begin{equation}\label{eq3.7}
\frac{1}{2}g(-\mathbf{v}_{0})+\mathbf{y}_{i}=\frac{1}{2}g(\Sigma_{i=1}^{4}(\delta_{i}-\sigma_{i})\mathbf{e}_{i}).
\end{equation}
If all $\delta_{i}=\sigma_{i}$, $i=1,...,4$, then the point \eqref{eq3.6} is contained in $\cap_{i=1}^{4} F_{\delta_{i}(i)}$; if $\delta_{i}=\sigma_{i}$, $i=1,...,3$ and $\delta_{4}\neq\sigma_{4}$, then the point \eqref{eq3.6} is contained in $\cap_{i=1}^{3} F_{\delta_{i}(i)}\cap Q$; if $\delta_{i}=\sigma_{i}$, $i=1,2$ and $\delta_{j}\neq\sigma_{j}$, $j=3,4$, then the point \eqref{eq3.6} is contained in $\cap_{i=1}^{2} F_{\delta_{i}(i)}\cap Q$; if $\delta_{1}=\sigma_{1}$, $\delta_{i}\neq\sigma_{i}$, $i=2,...,4$, then the point \eqref{eq3.7} is contained in $\cap_{i=2}^{4} F_{\delta_{i}(i)}\cap Q$; if all $\delta_{i}\neq\sigma_{i}$, $i=1,...,4$, then the point \eqref{eq3.7} is contained in $\cap_{i=1}^{4} F_{\delta_{i}(i)}$.

By above discussions and $X$ touches each facet of $Q$ at exactly one interior point, we have $d_{C_{4,p}}(X_{i})<2$ for all $i=1,...,16$. Therefore, $b_{C_{4,p}}(X)\leq 2^{4}$.
\end{enumerate}

\subsection{p=1}
By \eqref{eq0.1}, determining the covering number of a convex body is useful for solving the Borsuk's partition problem. Let $m$ be a positive integer and let $\gamma_{m}(K)$ be the smallest positive number $r$ such that $K$ can be covered by $m$ translates of $rK$. Clearly, $\gamma_{m}(K)<1$ is equivalent to $\gamma(K)\leq m$. Firstly, we estimate the values of $\gamma_{2n}(C_{n,p})$.

\begin{lem}[\cite{Zong}]\label{lem4}
Let $K$ be an $n$-dimensional convex body, $\lambda$ be a real number satisfying $0<\lambda<1$, $R$ be a closed region on $\partial(K)$ with boundary $\Gamma$ and a relatively interior point $\mathbf{p}$. If $\Gamma\cup\{\mathbf{p}\}\subset \lambda K+\mathbf{y}$ holds for some point $\mathbf{y}$, then we have $R\subset \lambda K+\mathbf{y}$.
\end{lem}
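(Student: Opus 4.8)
The plan is to argue by contradiction. Put $L:=\lambda K+\mathbf{y}$, a convex body, and write $h_{K}(\mathbf{u}):=\max_{\mathbf{x}\in K}\langle\mathbf{x},\mathbf{u}\rangle$, $h_{L}(\mathbf{u}):=\max_{\mathbf{x}\in L}\langle\mathbf{x},\mathbf{u}\rangle=\lambda h_{K}(\mathbf{u})+\langle\mathbf{y},\mathbf{u}\rangle<h_{K}(\mathbf{u})$ for the support functions, valid for every $\mathbf{u}\neq\mathbf{o}$. Assume $R\not\subseteq L$ and fix $\mathbf{z}_{0}\in R\setminus L$. Since $L$ is compact and convex and $\mathbf{z}_{0}\notin L$, there is a unit vector $\mathbf{n}$ with $\langle\mathbf{z}_{0},\mathbf{n}\rangle>h_{L}(\mathbf{n})$. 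As $R$ is compact, $\mathbf{x}\mapsto\langle\mathbf{x},\mathbf{n}\rangle$ attains its maximum over $R$ at some $\mathbf{z}^{*}$, and $\langle\mathbf{z}^{*},\mathbf{n}\rangle\geq\langle\mathbf{z}_{0},\mathbf{n}\rangle>h_{L}(\mathbf{n})$. On the other hand $\Gamma\cup\{\mathbf{p}\}\subseteq L$ forces $\langle\mathbf{x},\mathbf{n}\rangle\leq h_{L}(\mathbf{n})$ for all $\mathbf{x}\in\Gamma\cup\{\mathbf{p}\}$; hence $\mathbf{z}^{*}\notin\Gamma$ and $\mathbf{z}^{*}\neq\mathbf{p}$, so $\mathbf{z}^{*}$ lies in the relative interior of $R$, which is relatively open in $\partial(K)$.

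Next I would show that $\mathbf{z}^{*}$ lies on the exposed face $F:=\{\mathbf{x}\in K:\langle\mathbf{x},\mathbf{n}\rangle=h_{K}(\mathbf{n})\}$ and that in fact $F\subseteq R$. Since the relative interior of $R$ is a relative neighbourhood of $\mathbf{z}^{*}$ in $\partial(K)$, the point $\mathbf{z}^{*}$ is a local maximum of $\langle\cdot,\mathbf{n}\rangle$ restricted to $\partial(K)$. Such a local maximum cannot have value strictly between $\min_{\mathbf{x}\in K}\langle\mathbf{x},\mathbf{n}\rangle$ and $h_{K}(\mathbf{n})$: choosing an interior point $\mathbf{q}$ of $K$ with $\langle\mathbf{q},\mathbf{n}\rangle<\langle\mathbf{z}^{*},\mathbf{n}\rangle$ (possible since $\langle\mathbf{z}^{*},\mathbf{n}\rangle>h_{L}(\mathbf{n})\geq\langle\mathbf{p},\mathbf{n}\rangle\geq\min_{\mathbf{x}\in K}\langle\mathbf{x},\mathbf{n}\rangle$) and a point $\mathbf{w}\in K$ with $\langle\mathbf{w},\mathbf{n}\rangle>\langle\mathbf{z}^{*},\mathbf{n}\rangle$, the points where the rays from $\mathbf{q}$ through $(1-\theta)\mathbf{z}^{*}+\theta\mathbf{w}$ leave $K$ tend to $\mathbf{z}^{*}$ as $\theta\to0^{+}$ and have $\langle\cdot,\mathbf{n}\rangle>\langle\mathbf{z}^{*},\mathbf{n}\rangle$, contradicting local maximality. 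Hence $\langle\mathbf{z}^{*},\mathbf{n}\rangle=h_{K}(\mathbf{n})$, i.e.\ $\mathbf{z}^{*}\in F$. Moreover $\langle\cdot,\mathbf{n}\rangle\equiv h_{K}(\mathbf{n})>h_{L}(\mathbf{n})\geq\max_{\mathbf{x}\in\Gamma}\langle\mathbf{x},\mathbf{n}\rangle$ on $F$, so $F\cap\Gamma=\varnothing$; as $F$ is connected and contains $\mathbf{z}^{*}\in R$, a point of $F$ lying outside $R$ would force a point of $F$ onto the relative boundary $\Gamma$ of $R$, which is impossible. Therefore $F\subseteq R$, and since $\langle\cdot,\mathbf{n}\rangle\equiv h_{K}(\mathbf{n})>h_{L}(\mathbf{n})$ on $F$, also $F\cap L=\varnothing$.

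Running the previous paragraph for every unit vector $\mathbf{m}$ with $\max_{\mathbf{x}\in R}\langle\mathbf{x},\mathbf{m}\rangle>h_{L}(\mathbf{m})$ shows the cap $\partial(K)\cap\{\mathbf{x}:\langle\mathbf{x},\mathbf{m}\rangle>h_{L}(\mathbf{m})\}$ lies entirely in $R$ and misses $L$. Their union over all such $\mathbf{m}$ is exactly $R\setminus L$, so $R\setminus L$ is relatively open in $\partial(K)$; it is also relatively closed in $\partial(K)\setminus L$ since $R$ is closed. Using that $L$ is a homothet of $K$ of ratio $\lambda<1$ — so that $L$ cannot wrap around $K$ and $\partial(K)\setminus L$ is connected — we obtain $R\setminus L=\partial(K)\setminus L$, i.e.\ $\partial(K)\setminus R\subseteq L$; together with $\Gamma\subseteq L$ this says that the entire outer part $\partial(K)\setminus\mathrm{relint}(R)$ of $\partial(K)$ lies in $L$, while the exposed face $F$ sits in $\mathrm{relint}(R)$ and avoids $L$.

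The remaining step — turning this configuration into a contradiction — is, I expect, the main obstacle, and it is where the ratio $\lambda<1$ has to be used essentially rather than as bookkeeping. What must be ruled out is that a homothet of $K$ of ratio smaller than $1$ contains all of $\partial(K)$ except a region whose interior meets an exposed face of $K$. A convenient way to package it is to push everything forward by the similarity $\varphi(\mathbf{x})=\lambda^{-1}(\mathbf{x}-\mathbf{y})$, which carries $L$ onto $K$: it sends $\partial(K)\setminus R$ and $\mathbf{p}$ into $K$ but sends the face $F\subseteq R\setminus L$ outside $K$, and one argues, using that $\varphi$ is an expanding homothety ($\lambda^{-1}>1$) and that $\Gamma$ encircles $F$ inside $\partial(K)$, that $\varphi(R)$ cannot have both its relative boundary and the witness point inside $K$ while bulging out of $K$ — precisely the contradiction sought. (Equivalently, one shows that the smallest homothet of $K$ containing $\Gamma\cup\{\mathbf{p}\}$ already contains all of $R$.) This done, $R\subseteq L$, proving the lemma.
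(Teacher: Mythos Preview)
The paper does not supply a proof of this lemma at all; it is quoted from \cite{Zong} and stated without argument, so there is no in-paper proof to compare against.

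Your attempt is not a complete proof, and it contains an error at the outset. The claim $h_{L}(\mathbf{u})=\lambda h_{K}(\mathbf{u})+\langle\mathbf{y},\mathbf{u}\rangle<h_{K}(\mathbf{u})$ ``for every $\mathbf{u}\neq\mathbf{o}$'' is false: nothing in the hypotheses forces $L=\lambda K+\mathbf{y}\subseteq K$, and for suitable $\mathbf{y}$ the inequality reverses in some directions. This slip does not seem to be load-bearing later, but it should be removed. More seriously, two of the substantive steps are asserted rather than proved. The connectedness of $\partial(K)\setminus L$ is waved through with ``$L$ cannot wrap around $K$''; this is true, but proving it is essentially of the same order of difficulty as the lemma itself. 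And the final paragraph you explicitly flag as ``the main obstacle'': after arriving at the configuration $\partial(K)\setminus\mathrm{relint}(R)\subseteq L$ with an exposed face $F\subseteq\mathrm{relint}(R)$ disjoint from $L$, you do not actually derive a contradiction. The sketch with the expanding homothety $\varphi$ ends in ``one argues \ldots\ that $\varphi(R)$ cannot have both its relative boundary and the witness point inside $K$ while bulging out of $K$'', which is a restatement of what needs to be shown, not an argument; the parenthetical ``equivalently, one shows that the smallest homothet of $K$ containing $\Gamma\cup\{\mathbf{p}\}$ already contains all of $R$'' is literally the lemma again. So the write-up stops exactly where the real content begins.
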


\begin{lem}\label{lem5}
$\gamma_{2n}(C_{n,p})\leq(\frac{n-1}{n})^{\frac{1}{p}}$ holds for all $1\leq p\leq2$ and $n\geq2$.
\end{lem}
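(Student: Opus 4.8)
The plan is to produce an explicit covering of $C_{n,p}$ by $2n$ translates of $rC_{n,p}$ with $r=\left(\tfrac{n-1}{n}\right)^{1/p}$, one translate aimed at each of the $2n$ extreme directions $\pm\mathbf{e}_i$. For $i\in\{1,\dots,n\}$ and $\sigma\in\{-1,1\}$ put
$$W_i^{\sigma}=\{\mathbf{x}\in C_{n,p}:\ \sigma x_i\geq |x_j|\ \text{for all }j\},$$
the wedge cut out of the ball by the convex cone on which the $i$-th coordinate dominates; note $W_i^{\sigma}$ is convex (intersection of the ball with a convex cone) and has $\mathbf{o}$ as a vertex. These $2n$ wedges cover $C_{n,p}$: given $\mathbf{x}$, pick $k$ with $|x_k|=\max_j|x_j|$ and $\sigma=\mathrm{sgn}(x_k)$, then $\mathbf{x}\in W_k^{\sigma}$ (and $\mathbf{o}\in W_1^{+}$). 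By permuting coordinates and flipping signs it suffices to show
$$W_n^{+}\ \subseteq\ rC_{n,p}+n^{-1/p}\mathbf{e}_n .$$
The translation vector $n^{-1/p}\mathbf{e}_n$ is in fact forced: it is the unique vector $c\,\mathbf{e}_n$ for which the ``equatorial'' point $n^{-1/p}(1,\dots,1)\in W_n^{+}$ lands in $rC_{n,p}+c\,\mathbf{e}_n$.

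To prove the inclusion I would invoke Lemma \ref{lem4} on the boundary patch $R:=W_n^{+}\cap\partial C_{n,p}=\{\mathbf{x}\in\partial C_{n,p}:x_n\geq|x_j|\ \forall j\}$, whose relative boundary is $\Gamma=\{\mathbf{x}\in R: x_n=|x_j|\ \text{for some }j<n\}$ and whose relative interior contains $\mathbf{p}=\mathbf{e}_n$. On $\Gamma$ one has $\sum_{j<n}|x_j|^p=1-x_n^p$ and $x_n^p\geq 1/n$, so membership in $rC_{n,p}+n^{-1/p}\mathbf{e}_n$ reduces to $(x_n-n^{-1/p})^{p}+(n^{-1/p})^{p}\leq x_n^{p}$, an instance of the superadditivity $a^{p}+b^{p}\leq(a+b)^{p}$ valid for $p\geq1$; for $\mathbf{p}=\mathbf{e}_n$ it reduces to $(1-n^{-1/p})^{p}+(n^{-1/p})^{p}\leq 1$, the same inequality with $a+b=1$. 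Lemma \ref{lem4} then gives $R\subseteq rC_{n,p}+n^{-1/p}\mathbf{e}_n$. Finally, every $\mathbf{x}\in W_n^{+}$ lies on a segment $[\mathbf{o},\mathbf{z}]$ with $\mathbf{z}=\mathbf{x}/\|\mathbf{x}\|_p\in R$, and $\mathbf{o}\in rC_{n,p}+n^{-1/p}\mathbf{e}_n$ precisely because $n^{-1/p}\leq r$, i.e. $\tfrac1n\leq\tfrac{n-1}{n}$; convexity of the translate now yields $W_n^{+}\subseteq rC_{n,p}+n^{-1/p}\mathbf{e}_n$. (Alternatively one can skip Lemma \ref{lem4} and check the solid inclusion directly: writing $u=x_n\ge0$ and $A=\sum_{j<n}|x_j|^p$, the constraints are $A+u^p\le1$ and $A\le(n-1)u^p$, and since the membership condition $A+|u-n^{-1/p}|^p\le\tfrac{n-1}{n}$ is increasing in $A$ one replaces $A$ by $\min\{1-u^p,(n-1)u^p\}$; the case $u\ge n^{-1/p}$ is the superadditivity inequality again, and the case $u\le n^{-1/p}$ is $(n-1)u^p+(n^{-1/p}-u)^p\le(n-1)(n^{-1/p})^p$, whose left-minus-right side vanishes at $u=n^{-1/p}$, equals $(n-2)(n^{-1/p})^p\ge0$ at $u=0$, and is unimodal.)

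Putting the pieces together, by symmetry $W_i^{\sigma}\subseteq rC_{n,p}+\sigma n^{-1/p}\mathbf{e}_i$ for all $i,\sigma$, so the $2n$ bodies $rC_{n,p}+\sigma n^{-1/p}\mathbf{e}_i$ cover $C_{n,p}$, giving $\gamma_{2n}(C_{n,p})\leq r=\left(\tfrac{n-1}{n}\right)^{1/p}$. The only real obstacle is the bookkeeping in the middle step: pinning down that the translation vector must be $n^{-1/p}\mathbf{e}_n$, and then passing from the boundary patch to the \emph{solid} wedge — and it is exactly this passage ($\mathbf{o}$ must lie in the translate) that uses, and explains, the hypothesis $n\geq2$. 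Everything else (the wedge decomposition and the superadditivity of $t\mapsto t^{p}$ for $p\ge1$) is routine.
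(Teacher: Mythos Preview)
Your proof is correct and follows essentially the same route as the paper: the same $2n$ translates $rC_{n,p}\pm n^{-1/p}\mathbf{e}_i$, the same appeal to Lemma~\ref{lem4} on a spherical cap, the same superadditivity inequality $a^{p}+b^{p}\le(a+b)^{p}$, and the same observation that $\mathbf{o}$ lies in each translate precisely when $n\ge 2$. The only cosmetic difference is the partition of $C_{n,p}$: the paper slices by the hyperplanes $\{x_i=\pm n^{-1/p}\}$ (so the boundary patch is $\{x_i\ge n^{-1/p}\}\cap\partial C_{n,p}$ and a small central cube is handled separately), whereas you slice by the dominant-coordinate cones $\{\sigma x_i\ge|x_j|\}$; since on $\partial C_{n,p}$ the latter region is contained in the former, the very same translate works in both cases. (One small slip in your parenthetical alternative: at $u=0$ the left-minus-right side is $-(n-2)/n$, not $+(n-2)/n$; the sign error is harmless since the inequality still holds and your unimodality claim is correct.)
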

\begin{proof}
Let $m=(\frac{1}{n})^{\frac{1}{p}}$ and $\lambda=(\frac{n-1}{n})^{\frac{1}{p}}$.
Let $\mathbf{y}_{i}=m\mathbf{e}_{i}$, $\mathbf{y}_{n+i}=-m\mathbf{e}_{i}$, $i=1,...,n$.
Here we show that
\begin{equation}\label{eq3.8}
  C_{n,p} \subseteq \bigcup_{i=1}^{n}((\lambda C_{n,p}+\mathbf{y}_{i})\cup (\lambda C_{n,p}+\mathbf{y}_{n+i})).
\end{equation}

Let $\Pi_{i}=\{\mathbf{x}=(x_{1},x_{2},...,x_{n}):x_{i}=m,\mathbf{x}\in \partial(C_{n,p})\}$. Let $R_{i}$ be a closed region on $\partial(C_{n,p})$ with boundary $\Pi_{i}$ and a relatively interior point $\mathbf{e}_{i}$, $i=1,...,n$. For every point $\mathbf{x}\in\Pi_{i}$, we have
$$|x_{1}|^{p}+|x_{2}|^{p}+...+|x_{i-1}|^{p}+|x_{i+1}|^{p}+...+|x_{n}|^{p}=1-\frac{1}{n}=\frac{n-1}{n},$$
so $\mathbf{x}\in \lambda C_{n,p}+\mathbf{y}_{i}$. And $\mathbf{e}_{i}\in \lambda C_{n,p}+\mathbf{y}_{i}$, since
$$|1-m|^{p}=\left|\left(1-\left(\frac{1}{n}\right)^{\frac{1}{p}}\right)\right|^{p}\leq \frac{n-1}{n}.$$
By Lemma \ref{lem4}, we have $R_{i}\subset \lambda C_{n,p}+\mathbf{y}_{i}$, $i=1,...,n$. In other words, $\{\mathbf{x}:x_{i}\geq m\}\cap C_{n,p}\subseteq \lambda C_{n,p}+\mathbf{y}_{i}$, $i=1,...,n$. Similarly, $\{\mathbf{x}:x_{i}\leq -m\}\cap C_{n,p}\subseteq \lambda C_{n,p}+\mathbf{y}_{n+i}$, $i=1,...,n$.

Since $C_{n,p}\backslash\bigcup_{i=1}^{n}(\{\mathbf{x}:|x_{i}|\geq m\}\cap C_{n,p})$ is an $n$-dimensional parallelotope with vertices $(\pm m,\pm m,...,\pm m)$ and
$$\mathrm{conv}\{\{\mathbf{o}\}\cup \{\mathbf{x}:x_{i}=\pm m,\mid x_{j}\mid\leq m,j\neq i\}\subseteq \mathrm{conv}\{\{\mathbf{o}\}\cup (\{\mathbf{x}:x_{i}=\pm m\}\cap C_{n,p}) \}, i=1,...,n,$$
 it is sufficient to verify that $\mathbf{o}\in \lambda C_{n,p}+\mathbf{y}_{i}$, $i=1,...,2n$.

 As
$$\left(\left(\frac{1}{n}\right)^{\frac{1}{p}}\right)^{p}=\frac{1}{n}\leq \frac{n-1}{n},\ \ n\geq 2,$$
we know $\mathbf{o}\in \lambda C_{n,p}+\mathbf{y}_{i}$, $i=1,...,2n$.

Therefore, \eqref{eq3.8} holds. Consequently,
$$\gamma_{2n}(C_{n,p})\leq\left(\frac{n-1}{n}\right)^{\frac{1}{p}}$$
 holds for all $1\leq p\leq2$ and $n\geq 2$. This completes the proof of the lemma.
\end{proof}

In order to show the case of $p=1$, we use the concept of completeness. A bounded set is called complete if it is not properly contained in a set of the same diameter. Clearly, a complete set is convex and compact. In \cite{Egglestonh}, H. G. Eggleston showed that any bounded set $X\subseteq \mathbb{M}^{n}$ can be embedded in a complete set $A$ of the same diameter, the complete set $A$ is called the \textit{completion }of $X$. Generally, $A$ is not unique. For every bounded set $X\subseteq \mathbb{M}^{n}$, we have $b_{C}(X)\leq b_{C}(A)$ , since $X\subseteq A\cap X\subseteq \cup_{i=1}^{b_{C}(A)}(A_{i}\cap X)=\cup_{i=1}^{b_{C}(A)}(X_{i})$ and $d_{C}(X_{i})=d_{C}(A_{i}\cap X)\leq d_{C}(A_{i})<d_{C}(A)=d_{C}(X)$.

The complete sets have a useful characterization in terms of supporting slabs. A supporting slab of the convex body $K\in\mathcal{K}^{n}$ is any closed set $\Sigma\supseteq K$ that is bounded by two parallel supporting hyperplanes $H$, $H'$ of $K$. The distance between $H$ and $H'$ is called the width of $\Sigma$. For any other convex body $M$, we say that the supporting slab $\Sigma$ of $K$ is $M$-regular if the supporting slab of $M$ that is parallel to $\Sigma$ has the property that at least one of its bounding hyperplanes contains a smooth boundary point of $M$ (a boundary point through which passes only one supporting hyperplane of $M$). In \cite{Moreno} and \cite{Morenoj}, J. P. Moreno and R. Schneider gave a new characterization of the complete sets in $\mathbb{M}^{n}$. They also states that in the case of a polyhedral norm, the space of translation classes of complete sets of given diameter is a finite polytopal complex.
\begin{lem}[\cite{Moreno}]\label{lem2}
Let $d>0$. The $n$-dimensional convex body $K\in \mathcal{K}^{n}$ is a complete set of diameter $d$ if and only if the following properties hold:
\begin{description}
  \item[(a)] Every $C$-regular supporting slab of $K$ has width $\leq d$, $C$ is the unit ball of $\mathbb{M}^{n}$.
  \item[(b)] Every $K$-regular supporting slab of $K$ has width $d$.
\end{description}
\end{lem}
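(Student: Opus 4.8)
\emph{Proof plan.}
I would first recall the standard reformulation of completeness: with $\|\cdot\|_C$ the norm, $d_C(\cdot)$ the diameter and $h_K$ the support function, a convex body $K$ is a complete set of diameter $d$ if and only if $d_C(K)=d$ and $K=\bigcap_{x\in K}(x+dC)$ --- the inclusion ``$\supseteq$'' being exactly the maximality statement that no point can be added to $K$ without enlarging the diameter, and ``$\subseteq$'' being automatic once $d_C(K)\le d$. I would also note that a supporting slab $\Sigma$ of $K$ with Euclidean normal $u$ has norm-width $\bigl(h_K(u)+h_K(-u)\bigr)/h_C(u)$, and that this is always $\le d_C(K)$: choosing $a,b\in K$ attaining $h_K(u)$ and $-h_K(-u)$ gives $h_K(u)+h_K(-u)=\langle a-b,u\rangle\le\|a-b\|_C\,h_C(u)\le d_C(K)\,h_C(u)$. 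This reduces the lemma to statements about the ball intersection $\bigcap_{x\in K}(x+dC)$.

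For the forward implication, assume $K$ is complete of diameter $d$, so $d_C(K)=d$ and $K=\bigcap_{x\in K}(x+dC)$. Then (a) is immediate, since \emph{every} supporting slab of $K$ has width $\le d_C(K)=d$. For (b), let $\Sigma$ be a $K$-regular supporting slab with normal $u$; after replacing $u$ by $-u$ if necessary we may assume the face $F_K(u)=\{x\in K:\langle x,u\rangle=h_K(u)\}$ contains a smooth boundary point $p$ of $K$. Since $p\in\partial K=\partial\bigcap_{x\in K}(x+dC)$, there is $x_0\in K$ with $\|p-x_0\|_C=d$, i.e. $p\in\partial(x_0+dC)$ (were $\|p-x\|_C<d$ for all $x\in K$, compactness of $K$ would make $p$ an interior point of the intersection). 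The nonzero outer normal cone of $x_0+dC$ at $p$ is contained in that of $K$ at $p$ because $K\subseteq x_0+dC$, and the latter is the ray $\mathbb{R}_{\ge0}u$ by smoothness of $p$; hence $p-x_0$ lies in the face of $dC$ with normal $u$, so $\langle p-x_0,u\rangle=d\,h_C(u)$. Combining $\langle p,u\rangle=h_K(u)$ with $\langle x_0,u\rangle\ge -h_K(-u)$ yields $h_K(u)+h_K(-u)\ge d\,h_C(u)$, so the width of $\Sigma$ is $\ge d$; with the upper bound it equals $d$.

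For the converse, assume (a) and (b). First I would prove $d_C(K)=d$. For ``$\ge$'': $K$ has a smooth boundary point $p$, and the supporting slab of $K$ at $p$ is $K$-regular, so by (b) it has width $d\le d_C(K)$. For ``$\le$'': take diametral $a,b\in K$ and $u_0$ with $h_C(u_0)=1$ and $\langle a-b,u_0\rangle=\|a-b\|_C$; the supporting slab of $K$ with normal $u_0$ has width $\ge\langle a-b,u_0\rangle=d_C(K)$, while one arranges this slab to be $C$- or $K$-regular (directly if $C$ is a polytope, taking $u_0$ a facet normal of $C$; by an approximation using density of smooth boundary points in general), so (a) or (b) bounds its width by $d$. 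Hence $d_C(K)=d$, so $K\subseteq\bigcap_{x\in K}(x+dC)$. For the reverse inclusion, given $y\notin K$, separate $y$ strictly from $K$ and pass to the supporting hyperplane of $K$ at a nearby smooth boundary point $p$ (possible by density of smooth points), with outer unit normal $u$, so that still $\langle y,u\rangle>h_K(u)=\langle p,u\rangle$; that slab is $K$-regular, so by (b) its width is $d$, giving $x_0\in K$ with $\langle x_0,u\rangle=h_K(u)-d\,h_C(u)$. Then $\langle y-x_0,u\rangle>d\,h_C(u)$, which forces $\|y-x_0\|_C>d$ and hence $y\notin\bigcap_{x\in K}(x+dC)$. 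So $K=\bigcap_{x\in K}(x+dC)$, and $K$ is complete of diameter $d$.

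The algebraic core is short --- essentially the normal-cone computation in the forward direction and the one-line contradiction in the converse. The step I expect to be the main obstacle is the bookkeeping with smooth boundary points and regular slabs: showing that the diametral direction used for $d_C(K)\le d$ and the separating direction used for $y\notin K$ can always be taken $C$- or $K$-regular. This is delicate exactly when $C$ --- and, for a polyhedral norm, also the complete set $K$ --- is a polytope, where the regular directions form only a finite set and free perturbation is unavailable, so those cases must be treated by hand. Fixing throughout that ``width'' means width relative to $C$ is also needed for the inequalities to line up.
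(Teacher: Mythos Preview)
The paper does not give its own proof of this lemma: it is quoted verbatim from Moreno--Schneider \cite{Moreno} (the bracket \texttt{[\textbackslash cite\{Moreno\}]} in the lemma header signals exactly this), and the surrounding text only uses it as a black box together with Lemma~\ref{lem3} to describe complete sets for the polytopal ball $C_{4,1}$. So there is nothing in this paper to compare your argument against.

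That said, your proof plan is essentially the standard one and is sound. The characterization $K=\bigcap_{x\in K}(x+dC)$ is indeed equivalent to completeness, the normal-cone argument for (b) in the forward direction is correct, and the separation-plus-smooth-point argument for the converse is the right mechanism. Two small comments on the parts you flagged as delicate. First, for $d_C(K)\le d$ when $C$ is a polytope: any boundary point of $C$ lies in some facet, so the normal cone at $(a-b)/\|a-b\|_C$ always contains a facet normal of $C$, and you may take $u_0$ to be that --- no perturbation is needed. Second, for the converse step ``pass to a nearby smooth boundary point $p$ so that still $\langle y,u\rangle>h_K(u)$'': the clean way is to note that the set of Euclidean-unit outer normals $u$ with $\langle y,u\rangle>h_K(u)$ is open and nonempty (strict separation), while normals at smooth boundary points of $K$ are dense in the unit sphere of outer normals (Reidemeister/Anderson--Klee); intersecting gives what you need without having to track which boundary point of $K$ you are near. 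With these two adjustments the bookkeeping you were worried about disappears, and the proof goes through uniformly for polytopal and smooth $C$.
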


\begin{lem}[\cite{Morenoj}]\label{lem3}
Let $\Sigma_{1}$,...,$\Sigma_{k}$ be the $C$-regular supporting slabs of the polytopal unit ball $C$. Each complete set $K$ with diameter $2$ is of the form
\begin{equation*}
K=\bigcap_{i=1}^{k}(\Sigma_{i}+\mathbf{t}_{i})\label{eq7}
\end{equation*}
with $\mathbf{t}_{i}\in \mathbb{R}^{n}$, $i=1,...,k$.
\end{lem}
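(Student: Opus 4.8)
The plan is to prove the stated (forward) implication directly: I will exhibit $K$ itself as an intersection of translated copies of $\Sigma_1,\dots,\Sigma_k$, invoking the completeness of $K$ only at the end as a maximality principle, and I will not need the full strength of Lemma \ref{lem2}.

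First I would fix a convenient description of the polytopal unit ball: write $C = \{\mathbf{x}:|\langle\mathbf{u}_i,\mathbf{x}\rangle|\le 1,\ i=1,\dots,k\}$, where $\pm\mathbf{u}_1,\dots,\pm\mathbf{u}_k$ are the outer facet normals of $C$ scaled so that each $\{\langle\mathbf{u}_i,\cdot\rangle = 1\}$ supports a facet (the facets come in opposite pairs since $C$ is centrally symmetric). The point to check is that the $C$-regular supporting slabs of $C$ are exactly $\Sigma_i = \{\mathbf{x}:|\langle\mathbf{u}_i,\mathbf{x}\rangle|\le 1\}$, $i = 1,\dots,k$: for a polytope the smooth boundary points are precisely the relative interiors of facets, so a supporting slab of $C$ with normal direction $\mathbf{v}$ has one of its bounding hyperplanes through a smooth point of $C$ if and only if $\mathbf{v}$ is a positive multiple of some facet normal, and by central symmetry both bounding hyperplanes then support opposite facets. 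In particular $\bigcap_{i=1}^k\Sigma_i = C$ and $\sup_{\mathbf{w}\in C}\langle\mathbf{u}_i,\mathbf{w}\rangle = 1$ for every $i$.

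Now let $K$ be complete with $d_C(K) = 2$, and for each $i$ set $a_i = \min_{\mathbf{x}\in K}\langle\mathbf{u}_i,\mathbf{x}\rangle$, $b_i = \max_{\mathbf{x}\in K}\langle\mathbf{u}_i,\mathbf{x}\rangle$. Picking $\mathbf{x}^{\ast},\mathbf{y}^{\ast}\in K$ realizing $b_i$ and $a_i$ and writing $\mathbf{x}^{\ast}-\mathbf{y}^{\ast} = \|\mathbf{x}^{\ast}-\mathbf{y}^{\ast}\|_C\,\mathbf{w}$ with $\mathbf{w}\in C$ gives $b_i-a_i = \|\mathbf{x}^{\ast}-\mathbf{y}^{\ast}\|_C\,\langle\mathbf{u}_i,\mathbf{w}\rangle\le d_C(K) = 2$ (using $\langle\mathbf{u}_i,\mathbf{w}\rangle\le 1$) --- the elementary ``width $\le$ diameter'' fact, which is the content of condition (a) of Lemma \ref{lem2}. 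Hence there is $\mathbf{t}_i$ with $\langle\mathbf{u}_i,\mathbf{t}_i\rangle = \tfrac12(a_i+b_i)$, so that $K\subseteq\{\mathbf{x}:a_i\le\langle\mathbf{u}_i,\mathbf{x}\rangle\le b_i\}\subseteq\Sigma_i+\mathbf{t}_i$; put $\widetilde{K} := \bigcap_{i=1}^k(\Sigma_i+\mathbf{t}_i)\supseteq K$. The crux is now to bound $d_C(\widetilde{K})$: if $\mathbf{x},\mathbf{y}\in\widetilde{K}$, then for each $i$ both points lie in $\Sigma_i+\mathbf{t}_i$, on which $\langle\mathbf{u}_i,\cdot\rangle$ varies over an interval of length $2$, so $|\langle\mathbf{u}_i,\mathbf{x}-\mathbf{y}\rangle|\le 2$; since this holds for all $i$, $\mathbf{x}-\mathbf{y}\in 2\bigcap_{i=1}^k\Sigma_i = 2C$, i.e. $\|\mathbf{x}-\mathbf{y}\|_C\le 2$. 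Thus $d_C(\widetilde{K})\le 2$, hence $d_C(\widetilde{K}) = 2 = d_C(K)$ since $K\subseteq\widetilde{K}$; as a complete set cannot be properly contained in a set of its own diameter, $K = \widetilde{K} = \bigcap_{i=1}^k(\Sigma_i+\mathbf{t}_i)$, as claimed.

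The only genuinely delicate step is the identification in the second paragraph --- checking that no ``oblique'' supporting slab of the polytope $C$ is $C$-regular, so that $\bigcap_i\Sigma_i$ is exactly $C$; this is what makes the diameter estimate go through. The rest is bookkeeping: the bound $b_i-a_i\le 2$ is just ``width $\le$ diameter'', and passing from $K\subseteq\widetilde{K}$ with $d_C(\widetilde{K})\le d_C(K)$ to $K = \widetilde{K}$ is precisely the maximality built into completeness, so no reverse inclusion ever has to be argued directly.
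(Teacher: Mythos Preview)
The paper does not give its own proof of this lemma; it is quoted verbatim from Moreno--Schneider \cite{Morenoj}, so there is nothing to compare your argument against within the present paper. Your proof is correct and self-contained: the identification of the $C$-regular supporting slabs of a centrally symmetric polytope with its facet slabs is right (smooth boundary points of a polytope are exactly relative interiors of facets), the bound $b_i-a_i\le 2$ is the standard ``width $\le$ diameter in the gauge direction'' computation, and the final step---$K\subseteq\widetilde K$ with $d_C(\widetilde K)\le 2$, hence $K=\widetilde K$ by diametrical maximality---is exactly how completeness should be used. One small point you might make explicit is that $\widetilde K$ is bounded (so that $d_C(\widetilde K)$ is well defined and the completeness clause applies); this is immediate from $\widetilde K-\widetilde K\subseteq 2C$, which you have already established.
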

For the polytopal unit ball $C_{4,1}$ of $M_{1}^{4}$, its supporting slabs are $\Sigma_{1}$ with outer normal vectors $\pm\mathbf{u}_{1}=\pm(1,1,1,1)$, $\Sigma_{2}$ with outer normal vectors $\pm\mathbf{u}_{2}=\pm(-1,-1,1,1)$, $\Sigma_{3}$ with outer normal vectors $\pm\mathbf{u}_{3}=\pm(1,-1,-1,1)$, $\Sigma_{4}$ with outer normal vectors $\pm\mathbf{u}_{4}=\pm(-1,1,-1,1)$, $\Sigma_{5}$ with outer normal vectors $\pm\mathbf{u}_{5}=\pm(1,1,1,-1)$, $\Sigma_{6}$ with outer normal vectors $\pm\mathbf{u}_{6}=\pm(-1,1,1,1)$, $\Sigma_{7}$ with outer normal vectors $\pm\mathbf{u}_{7}=\pm(1,-1,1,1)$ and $\Sigma_{8}$ with outer normal vectors $\pm\mathbf{u}_{8}=\pm(1,1,-1,1)$.
Each slab $\Sigma_{i}$ is bounded by two parallel hyperplanes $\Phi_{i}=\{\mathbf{x}:\langle \mathbf{x}, \mathbf{u}_{i}\rangle=1\}$ and $\Phi_{-i}=\{\mathbf{x}:\langle \mathbf{x}, -\mathbf{u}_{i}\rangle=1\}$, $i=1,...,8$.

For every bounded set $X\subseteq M_{p}^{4}$ with $d_{C_{4,1}}(X)=2$, there always exists a completion $D$ of $X$. Up to some translation and by Lemma \ref{lem3}, we may assume that
\begin{align*}
   X\subseteq D
    &=D(\alpha_{1},\alpha_{2},\alpha_{3},\alpha_{4})\\
    &=\bigcap_{i=5}^{8}\Sigma_{i}\cap \bigcap_{i=1}^{4}(\Sigma_{i}+\alpha_{i}\mathbf{u}_{i})\\
&=\left(C_{4,1}\cup (\cup _{i=1}^{4} S_{\pm i})\right)\cap \bigcap_{i=1}^{4}(\Sigma_{i}+\alpha_{i}\mathbf{u}_{i})\\
&=\bigcup_{i=5}D_{i},
\end{align*}
where $D_{1}= C_{4,1}\cap \bigcap_{i=1}^{4}(\Sigma_{i}+\alpha_{i}\mathbf{u}_{i})$, $D_{j}=S_{j-1}\cap \bigcap_{i=1}^{4}(\Sigma_{i}+\alpha_{i}\mathbf{u}_{i})$ or $D_{j}=S_{-(j-1)}\cap \bigcap_{i=1}^{4}(\Sigma_{i}+\alpha_{i}\mathbf{u}_{i})$
with $|\alpha_{i}|\leq \frac{1}{4}$, $j=2,...,5$, and
$$S_{ i}=\mathrm{conv}\left((C_{4,1}\cap\Phi_{i})\cup\frac{1}{2}\mathbf{u}_{i}\right), S_{ -i}=-S_{ i},\ \ i=1,...,4.$$

  For $i=1,...,4$, we obtain that $d_{C_{4,1}}(S_{i})=d_{C_{4,1}}(S_{-i})=2$ and that there are exactly five vertices of $S_{ i}$ or $S_{ -i}$ such that the distance between each pair is $2$. Neither $S_{i}$ nor $S_{-i}$ is a complete set by Lemma \ref{lem2}, since there exist a $S_{i}$ $(S_{-i})$-regular supporting slab of $S_{i}$ $(S_{-i})$ with width $1$. By Lemma \ref{lem5}, $C_{4,1}$ can be covered by $8$ smaller copies of $C_{4,1}$. In fact, some vertices with neighbour also have been covered from the covering of $C_{4,1}$, so the remaining part of $S_{ i}$ or $S_{ -i}$ has diameter strictly smaller than $2$. Therefore, $X$ can be divided into at most $12$ parts, each of which has diameter strictly smaller than $2$. Consequently, $b_{C_{4,1}}(X)\leq 2^{4}$.

In conclusion, $b_{C_{4,p}}(X)\leq 2^4$ holds for all four-dimensional $\ell_{p}$ space and all bounded set $X$. This completes the proof of the theorem.

\section*{Acknowledgements}
We are very grateful to professor Chuanming Zong for his supervision and discussion. This work is supported by the National Natural Science Foundation of China (NSFC11921001), the National Key Research and Development Program of China (2018YFA0704701) and the Natural Science Foundation of Jiangsu Province (BK20210555).

\end{document}